\theoremstyle{plain}
\numberwithin{equation}{section}
\newtheorem{theorem}{Theorem}[section]
\newtheorem{lemma}[theorem]{Lemma}
\newtheorem{corollary}[theorem]{Corollary}
\newtheorem{remark}[theorem]{Remark}
\newtheorem{definition}[theorem]{Definition}
\newcommand{\ze}{\zeta}
\newcommand{\ii}{\mathsf{i}}
\newcommand{\mo}{\mathsf{mod}~2\pi}
\newcommand{\mop}{\mathsf{mod}~\pi}
\newcommand{\qedw}{\hfill \ensuremath{\Box}}
\begin{document}

\title[Generalized Egerv\'ary's results for harmonic trinomials]{Egerv\'ary's theorems for harmonic trinomials}

\author{Gerardo Barrera}
\address{University of Helsinki, Department of Mathematics and Statistics.
P.O. Box 68, Pietari Kalmin katu 5, FI-00014. Helsinki, Finland.
\url{https://orcid.org/0000-0002-8012-2600}}
\email{gerardo.barreravargas@helsinki.fi}
\thanks{*Corresponding author: Gerardo Barrera.}
\author{Waldemar Barrera}
\address{Facultad de Matem\'aticas, Universidad Aut\'onoma de Yucat\'an. Anillo Perif\'erico Norte Tablaje CAT 13615, M\'erida, Yucat\'an, M\'exico.
\url{https://orcid.org/0000-0001-6885-5556}}
\email{bvargas@correo.uady.mx}
\author{Juan Pablo Navarrete}
\address{Facultad de Matem\'aticas, Universidad Aut\'onoma de Yucat\'an. Anillo Perif\'erico Norte Tablaje CAT 13615, M\'erida, Yucat\'an, M\'exico.
\url{https://orcid.org/0000-0002-3930-4365}}
\email{jp.navarrete@correo.uady.mx}

\subjclass{Primary 12D10, 30C15; Secondary 26C10, 30C10}
\keywords{Bohl's Theorem; Egerv\'ary equivalent; Harmonic trinomials;  Lacunary polynomials; Multiplicity; Roots of polynomials; Trinomials}

\begin{abstract}
In this manuscript, 
we study the arrangements of
the roots in the complex plane for the lacunary harmonic polynomials called harmonic trinomials.
We provide necessary and sufficient conditions so that two general harmonic trinomials have the same set of roots up to a rotation around the origin in the complex plane, a reflection over the real axis, or a composition of the previous both transformations.
This extends  the results of Jen\H{o} Egerv\'ary given in~\cite{eger} for the setting of trinomials to the setting of harmonic trinomials.
\end{abstract}

\maketitle
%\tableofcontents

\section{\textbf{Introduction, main results and their consequences}}\label{sec:intro}
\subsection{\textbf{Introduction}}
\hfill

\noindent
The computation and the quantitative location of the roots for polynomials are important in many research areas, and therefore a vast literature in both pure mathematics and applied mathematics has been produced, we refer 
to~ \cite{Borweinbook,Mardenbook,Pan1997,Prasolovbook,RahmanSchmeisser,SheilSmallbook} and the references therein.

Given two positive integers $m$ and $n$, a \textit{trinomial} of degree $n+m$ is a lacunary polynomial with three terms of the form
\begin{equation}\label{def:TRI}
T(z):=Az^{n+m} + B z^m + C\quad \textrm{ for all }\quad z\in \mathbb{C},
\end{equation}
where $A$, $B$ and $C$ are non-zero complex numbers.
Despite the apparent simplicity 
of~\eqref{def:TRI},
the well-known works of P. Ruffini, N.H. Abel and \'E. Galois imply that for $n+m\geq 5$ and generic trinomials of the form~\eqref{def:TRI} there is no formula for their roots in terms of the so-called radicals.
For the literature reporting geometric, topological, quantitative and qualitative behavior of the roots for trinomials of the form~\eqref{def:TRI} we refer 
to~\cite{Barrera2,BarreraMaganaNavarrete,Belki,Bohl,Botta,BRILLESLYPER,BrilleslyperSchaubroeck2018,BrilleslyperSchaubroeck2014,Cermak2015survey,CermakFedorkova2022,Cermak2022,Cermak2015,Cermak2019,eger,Fell,Howell,Kipnis2004,Kuruklis,Matsunaga2010,Melman,Nekrassoff,Szabo,Theobald} and the references therein. 

In~\cite{eger} J.~Egerv\'ary analyzes the roots of general trinomials. More precisely, he studied
\begin{itemize}
\item[(I)] the arrangements of
the roots of~\eqref{def:TRI} in the complex plane, that is, provides necessary and sufficient conditions so that
two general \textit{trinomials} possess the same set of roots up to a rotation around the origin in the complex plane, a reflection over the real axis, or a composition of the previous both transformations.
The latter is an equivalence relation,
which in the sequel we refer to as its Egerv\'ary equivalent,
see Definition~\ref{def:eger} below.
\item[(II)] the description of geometric sectors for the localization of the roots of~\eqref{def:TRI}.
\end{itemize}
Since~\cite{eger} is written in Hungarian, many of the results given there have been rediscovered afterwards.
We refer to~\cite{Szabo} for an English review of~\cite{eger}.

In this manuscript, we extend~(I) to the setting of harmonic trinomials. More precisely, to the setting of lacunary harmonic polynomials of the form
\begin{equation}\label{eq:Hdef}
H(z):=Az^{n+m} + B \overline{z}^m + C\quad \textrm{ for all }\quad z\in \mathbb{C},
\end{equation}
where $A$, $B$ and $C$ are non-zero complex numbers,
and  $\overline{\zeta}$ denotes the complex conjugate of
the given complex number $\zeta$, see Theorem~\ref{th:egervaryI} below.
As a consequence of Theorem~\ref{th:egervaryI} we obtain the following results.
\begin{itemize}
\item[(a)]
A characterization of the class of harmonic trinomials of the form~\eqref{eq:Hdef} which are Egerv\'ary equivalent with a harmonic trinomial with real coefficients,
see Corollary~\ref{cor:real} below.
\item[(b)] A harmonic trinomial of the form~\eqref{eq:Hdef} with different roots having the same complex modulus is Egerv\'ary equivalent to a harmonic trinomial with real coefficients, see Corollary~\ref{cor:difroots} below.
\item[(c)]
 A harmonic trinomial of the form~\eqref{eq:Hdef} with a root of multiplicity at least two is Egerv\'ary equivalent to a harmonic trinomial with real coefficients,
see Corollary~\ref{cor:leasttwo} below.
\end{itemize}
In addition, Theorem~\ref{th:egervaryI} with the help of the following results
in~\cite{Barrera}, Lemma~2.6, Lemma~2.11, Lemma~A.3 and Proposition~2.3, 
 yields the following statements.
\begin{itemize}
\item[(e)] A geometric degenerate triangle condition on the modulus of the coefficients 
of~\eqref{eq:Hdef} so that~\eqref{eq:Hdef} is Egerv\'ary equivalent to a harmonic trinomial with real coefficients, see Theorem~\ref{lem:geometric} below.
\item[(f)] Two harmonic trinomials of the form~\eqref{eq:Hdef} with roots having the same complex modulus (such roots may be different) and satisfying that the ratio between the complex modulus of their respective coefficients with the same degree is constant, are Egerv\'ary equivalent, 
see Theorem~\ref{th:samemodulus} below.
\end{itemize}

By Bezout's Theorem (Theorem~1 and Theorem~5 in~\cite{Wilmshurst}) it follows that~\eqref{eq:Hdef} has at most $(n+m)^2$ roots. 
Recently, in Corollary~1.4 of~\cite{Barrera}, it is shown that~\eqref{eq:Hdef} has at most $n+3m$ roots. Moreover, such bound is sharp in the sense that there exist harmonic trinomials with exactly $n+3m$ roots.
In general, there exist harmonic polynomials with exactly $(n+m)^2$ roots, see for instance Section~2 in~\cite{Wilmshurst} or p.~2080 
of~\cite{Bshouty}.

Recently, the corresponding geometric sectors as in~(II) for harmonic trinomials of the 
form~\eqref{eq:Hdef} with $A=1, B\in \mathbb{C}\setminus\{0\}, C=-1$ has been derived in~\cite{Gao}. For references about location, counting, geometry, and lower/uppers bounds for the moduli of roots for 
harmonic polynomials including probabilistic approaches and numerical experiments, we refer 
to~\cite{Barrera,Brooks,Bshouty,BshoutyLyzzaik,Gao,Geleta,GeletaAlemu,Geyer,Harrat,
Hauenstein,Khavinson,KhavinsonNeumann2008,KhavinsonNeumann,KhavinsonSwiatek,Lee,LeeSaez,
Lerario,LundbergThomack,LiWei,Lundberg,LundbergThomack,Thomackpreprint,ThomackTyree,Wilmshurst} and the references therein.

\subsection{\textbf{Preliminaries and main results}}\label{sub:preli}
\hfill

\noindent
In this subsection, we present the preliminaries and state the results of this manuscript.

Given $m,n\in \mathbb{N}:=\{1,2,\ldots,\}$, we consider two harmonic trinomials 
\begin{equation}\label{eq:f1}
h_1(z):= A_1 z^{n+m}+B_1\overline{z}^m +C_1 \quad \textrm{ for all }\quad z\in \mathbb{C},
\end{equation}
and
\begin{equation}\label{eq:f2}
h_2(z):= A_2 z^{n+m}+B_2\overline{z}^m +C_2 \quad \textrm{ for all }\quad z\in \mathbb{C},
\end{equation}
where $A_1$, $A_2$, $B_1$, $B_2$, $C_1$, $C_2$ are non-zero complex numbers.

We start with the following definition, which rigorously encodes the arrangements of roots that are equivalent.

\begin{definition}[Egerv\'ary equivalent]\label{def:eger}
\hfill

\noindent
Let $h_1$ and $h_2$ be the harmonic polynomials given in~\eqref{eq:f1} 
and~\eqref{eq:f2}, respectively.
We say $h_1$ and $h_2$ are Egerv\'ary equivalent
if and only if the set of roots of $h_1$ differs of the set of roots of $h_2$ by 
\begin{itemize}
\item[(a)] a rotation around the origin in the complex plane,
\item[(b)] a reflection over the real axis,
\item[(c)] a composition of both transformations given in~(a) and~(b).
\end{itemize}
More precisely,
there exist a non-zero complex number $c$ and a real number  $\delta$ satisfying
\begin{equation}\label{eq:equi}
h_1(z)=c h_2(e^{\ii \delta}z)\quad \textrm{ for all }\quad z\in \mathbb{C}
\end{equation}
or
\begin{equation}\label{eq:equi1}
h_1(z)= c\overline{h}_2(e^{\ii \delta}z)\quad \textrm{ for all }\quad z\in \mathbb{C},
\end{equation} 
where  $\ii$ denotes the imaginary unit and
\[
\overline{h}_2(z):= \overline{A_2}z^{n+m}+\overline{B_2}\overline{z}^m +\overline{C_2}\quad \textrm{ for all }\quad  z\in \mathbb{C}.
\]
\end{definition}
We note that Definition~\ref{def:eger} defines an equivalence relation. 
In addition, we observe that~\eqref{eq:equi} and~\eqref{eq:equi1} are mutually exclusive whenever some of the coefficients $A_2$, $B_2$ or $C_2$ is not a real number. Indeed,
if~\eqref{eq:equi} and~\eqref{eq:equi1} both hold true, we have 
\[h_1(z)=c h_2(e^{\ii \delta}z)=c\overline{h}_2(e^{\ii \delta}z)\quad \textrm{ for all }\quad z\in \mathbb{C},
\]
which yields
\[
(A_2-\overline{A_2}) z^{n+m}+(B_2-\overline{B_2})\overline{z}^m +(C_2-\overline{C_2})=0\quad \textrm{ for all }\quad  z\in \mathbb{C}.
\]
The latter implies $A_2$, $B_2$ and $C_2$ are real numbers.

Along this manuscript, $|\ze|$ denotes the complex modulus of the given complex number $\ze$. 
We recall that the polar representation of $\zeta$ is given by $\zeta=|\zeta|e^{\ii \varphi}$, where $\varphi\in [0,2\pi)$ is the argument of $\zeta$.
Moreover,
for any real numbers $x$ and $y$, we write that 
\[
x\equiv y\quad \mo\quad \textrm{ if and only if }\quad x-y=2k\pi\quad \textrm{ for some }\quad k\in \mathbb{Z}.
\]

The first main result of this manuscript is the following extension of the results given in Equation~(2) and Equation~(3) in p.~37 
of~\cite{eger} or Theorem~1 in the 
survey~\cite{Szabo},  to the setting of harmonic trinomials. It reads as follows.

\begin{theorem}[Egerv\'ary's Theorem for harmonic trinomials]\label{th:egervaryI}
\hfill

\noindent
Let $h_1$ and $h_2$ be the harmonic polynomials given in~\eqref{eq:f1} 
and~\eqref{eq:f2}, respectively.
Then the following holds true:
$h_1$ and $h_2$ are  Egerv\'ary equivalent if and only if
\begin{equation}\label{eq:cociente}
\left|\frac{A_1}{A_2}\right|= \left|\frac{B_1}{B_2}\right|= \left|\frac{C_1}{C_2}\right|
\end{equation}
and  
\begin{equation}\label{eq:mod}
m(\alpha_1\pm \alpha_2)+(n+m)(\beta_1\pm \beta_2) - (n+2m)(\gamma_1\pm \gamma_2)\equiv 0\quad \mo,
\end{equation}
where $\alpha_1$, $\alpha_2$, $\beta_1$, $\beta_2$, $\gamma_1$ and $\gamma_2$ are the arguments in the polar representation of $A_1$, $A_2$, $B_1$, $B_2$, $C_1$ and $C_2$, respectively.
\end{theorem}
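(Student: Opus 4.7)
The plan is to work directly from Definition~\ref{def:eger} by matching coefficients on both sides of the identities~\eqref{eq:equi} and~\eqref{eq:equi1}, and then translating the resulting equalities into the modulus condition~\eqref{eq:cociente} and the argument condition~\eqref{eq:mod}. The minus-sign case of~\eqref{eq:mod} will correspond to~\eqref{eq:equi} and the plus-sign case to~\eqref{eq:equi1}.

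\textbf{Necessity.} Suppose first that \eqref{eq:equi} holds. Expanding
\[
c\,h_2(e^{\ii\delta}z)=cA_2\,e^{\ii(n+m)\delta}z^{n+m}+cB_2\,e^{-\ii m\delta}\overline{z}^m+cC_2
\]
and matching the coefficients of $z^{n+m}$, $\overline{z}^m$ and of the constant term with those of $h_1$ gives the three scalar identities
\[
A_1=cA_2e^{\ii(n+m)\delta},\qquad B_1=cB_2e^{-\ii m\delta},\qquad C_1=cC_2.
\]
Taking moduli immediately yields~\eqref{eq:cociente} with common value $|c|$, and the third identity also fixes $\arg c\equiv \gamma_1-\gamma_2\ \mo$. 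Substituting this value of $\arg c$ back into the $A$- and $B$-identities reduces the problem to the two real congruences
\[
(n+m)\delta\equiv \alpha_1-\alpha_2-(\gamma_1-\gamma_2),\qquad -m\delta\equiv \beta_1-\beta_2-(\gamma_1-\gamma_2)\quad \mo.
\]
Multiplying the first by $m$ and the second by $n+m$ and adding eliminates $\delta$ exactly and produces the minus-sign instance of~\eqref{eq:mod}. The case of~\eqref{eq:equi1} is handled by the same argument applied to $c\overline{h}_2(e^{\ii\delta}z)$, whose coefficients are $c\overline{A_2}e^{\ii(n+m)\delta}$, $c\overline{B_2}e^{-\ii m\delta}$ and $c\overline{C_2}$; here the constant-term identity forces $\arg c\equiv \gamma_1+\gamma_2$, and the analogous elimination step produces the plus-sign instance of~\eqref{eq:mod}.

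\textbf{Sufficiency.} Conversely, assume \eqref{eq:cociente} and, say, the minus-sign case of~\eqref{eq:mod}. Set $|c|:=|A_1|/|A_2|$ and $\arg c:=\gamma_1-\gamma_2$, so that the constant-term identity $C_1=cC_2$ holds automatically. One is then left to exhibit $\delta\in\mathbb{R}$ satisfying the two congruences displayed above. By construction, the scalar relation~\eqref{eq:mod} is precisely the compatibility obtained from this pair of congruences by the same multiply-and-add elimination, and a short B\'ezout/Euclidean-algorithm argument applied to the integers $m$ and $n+m$ converts this compatibility into the existence of a common solution $\delta$. Once $c$ and $\delta$ are produced, the three coefficient identities hold and~\eqref{eq:equi} is verified. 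The plus-sign case of~\eqref{eq:mod} is handled symmetrically: one sets $\arg c:=\gamma_1+\gamma_2$ and realizes~\eqref{eq:equi1} using the conjugate polynomial $\overline{h}_2$.

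\textbf{Main obstacle.} The delicate point is the sufficiency direction, namely passing from the single scalar relation~\eqref{eq:mod} back to the simultaneous solvability of the two congruences for $\delta$, whose moduli are governed by $n+m$ and by $m$. This is precisely where the specific integer weights $m$, $n+m$ and $n+2m$ appearing in~\eqref{eq:mod} play their role: they form the B\'ezout combination that both forces cancellation of the $\delta$-terms in the necessity step and, run in reverse, reconstructs a real $\delta$ from the compatibility hypothesis. Careful attention to the $\mo$ arithmetic and to the symmetric bookkeeping for the conjugate polynomial $\overline{h}_2$ in the plus-sign branch is the principal source of technical effort; everything else is routine coefficient matching.
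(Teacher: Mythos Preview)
Your proposal is correct and follows essentially the same route as the paper's proof: for necessity you match coefficients in \eqref{eq:equi} and \eqref{eq:equi1} and eliminate $\delta$ by the $m\cdot(\text{first})+(n+m)\cdot(\text{second})$ combination, exactly as the paper does, while for sufficiency you fix $c$ via the constant term and reduce to solving two simultaneous congruences for $\delta$. The ``short B\'ezout/Euclidean-algorithm argument'' you invoke for this last step is precisely what the paper isolates as Lemma~\ref{lem:parame} in Appendix~\ref{append}, so the only difference is packaging.
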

The proof is given in 
Subsection~\ref{sub:th:egervaryI}.

\begin{remark}[About the choice of $\pm$]
\hfill

\noindent
We point out that~\eqref{eq:mod} reads
\[
m(\alpha_1+ \alpha_2)+(n+m)(\beta_1+ \beta_2) - (n+2m)(\gamma_1+ \gamma_2)\equiv 0\quad \mo
\]
or
\[
m(\alpha_1- \alpha_2)+(n+m)(\beta_1- \beta_2) - (n+2m)(\gamma_1- \gamma_2)\equiv 0\quad \mo.
\]
\end{remark}

The following corollary is the analogous of the results given in Equation~(1), 
Equation~(5) and Equation~(6) in p.~38 of~\cite{eger} or p.~100 in~\cite{Szabo}  to the setting of harmonic trinomials. It reads as follows.

\begin{corollary}[The class of harmonic trinomials with real coefficients]\label{cor:real}
\hfill

\noindent
Let $h(z)=Az^{n+m}+B\overline{z}^m +C$ for all $z\in \mathbb{C}$ be a harmonic trinomial whose coefficients $A$, $B$ and $C$ are non-zero complex numbers.
We consider the polar representation of $A$, $
B$ and $C$, that is, $A= |A|e^{\ii \alpha}$, $B= |B|e^{\ii \beta}$, $C= |C|e^{\ii \gamma}$.  Then the following statements are equivalent.
\begin{itemize}
\item[(i)] The harmonic trinomial $h$ is Egerv\'ary equivalent  to a harmonic trinomial with real coefficients.
\item[(ii)] 
The angular relation 
\begin{equation}\label{eq:reales}
m\alpha+(n+m)\beta - (n+2m)\gamma\equiv 0 \quad \mop
\end{equation}
holds true.
\item[(iii)] The coefficients $A$, $B$ and $C$ satisfy that
\begin{equation}
\frac{A^mB^{n+m}}{C^{n+2m}}\quad \textrm {is a real number}.
\end{equation}
\end{itemize}
\end{corollary}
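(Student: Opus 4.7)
The plan is to use Theorem~\ref{th:egervaryI} as a black box and chase the three implications in the order (ii)$\Leftrightarrow$(iii), (i)$\Rightarrow$(ii), (ii)$\Rightarrow$(i).

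The equivalence (ii)$\Leftrightarrow$(iii) is immediate from the polar representation: the nonzero complex number $A^m B^{n+m}/C^{n+2m}$ has argument $m\alpha+(n+m)\beta-(n+2m)\gamma$ modulo $2\pi$, and it is real precisely when this argument belongs to $\pi\mathbb{Z}$, which is condition~\eqref{eq:reales}.

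For (i)$\Rightarrow$(ii) I would take the real-coefficient harmonic trinomial $h_2(z)=A_2 z^{n+m}+B_2\overline{z}^m+C_2$ furnished by (i) and apply Theorem~\ref{th:egervaryI} to the pair $h_1:=h$ and $h_2$. Since $A_2,B_2,C_2\in\mathbb{R}\setminus\{0\}$, the arguments $\alpha_2,\beta_2,\gamma_2$ lie in $\{0,\pi\}$, so $m\alpha_2+(n+m)\beta_2-(n+2m)\gamma_2$ is an integer multiple of $\pi$. Substituting into~\eqref{eq:mod} then forces $m\alpha+(n+m)\beta-(n+2m)\gamma$ to be an integer multiple of $\pi$ modulo $2\pi$, which is~\eqref{eq:reales}.

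For the converse (ii)$\Rightarrow$(i) I would explicitly construct a real-coefficient $h_2$ and then invoke Theorem~\ref{th:egervaryI} to certify the equivalence. Setting $|A_2|=|A|$, $|B_2|=|B|$, $|C_2|=|C|$ makes~\eqref{eq:cociente} automatic, reducing the task to selecting the signs of $A_2,B_2,C_2$ (equivalently, $\alpha_2,\beta_2,\gamma_2\in\{0,\pi\}$) together with the $\pm$ in~\eqref{eq:mod} so that $m(\alpha\pm\alpha_2)+(n+m)(\beta\pm\beta_2)-(n+2m)(\gamma\pm\gamma_2)\equiv 0$ modulo $2\pi$. The main obstacle lies here: the hypothesis only gives the left-hand side as a multiple of $\pi$, so one has to match parity modulo $2\pi$ using the eight sign patterns for $(A_2,B_2,C_2)$ combined with the binary $\pm$ choice, and verifying that this always suffices requires a short case analysis on the parities of $m$, $n+m$, and $n+2m$. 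Once that bookkeeping is carried out, Theorem~\ref{th:egervaryI} delivers the Egerv\'ary equivalence claimed in (i).
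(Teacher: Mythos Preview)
Your treatment of (ii)$\Leftrightarrow$(iii) and of (i)$\Rightarrow$(ii) matches the paper's: both compute the argument of $A^mB^{n+m}/C^{n+2m}$ for the former, and both apply Theorem~\ref{th:egervaryI} to $h$ and a real-coefficient partner, reducing via $\alpha_2,\beta_2,\gamma_2\in\{0,\pi\}$, for the latter.

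For (ii)$\Rightarrow$(i) the routes diverge. The paper does not feed back into Theorem~\ref{th:egervaryI}; instead it appeals to the Diophantine parametrization Lemma~\ref{lem:paramedos} to write $\alpha\equiv\gamma+(n+m)\delta$ and $\beta\equiv\gamma-m\delta$ modulo~$\pi$ for some real $\delta$, and then exhibits the equivalence by direct computation,
\[
h(z)=e^{\ii\gamma}\bigl(u|A|(e^{\ii\delta}z)^{n+m}+v|B|(\overline{e^{\ii\delta}z})^{m}+|C|\bigr),\qquad u,v\in\{-1,1\},
\]
producing the explicit real-coefficient representative $g_{u,v}(z)=u|A|z^{n+m}+v|B|\overline{z}^m+|C|$ that is reused later (e.g.\ in Theorem~\ref{lem:geometric}). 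Your route, by contrast, never identifies a concrete $\delta$ or representative.

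More importantly, your plan has a gap precisely at the ``short case analysis'' you defer. If $m\alpha+(n+m)\beta-(n+2m)\gamma$ is an \emph{odd} multiple of $\pi$, you need $m\alpha_2+(n+m)\beta_2-(n+2m)\gamma_2$ to be an odd multiple of $\pi$ as well (the $\pm$ in~\eqref{eq:mod} does not help, since both choices impose the same parity constraint). But when $m$ and $n+m$ are both even, so is $n+2m$, and then every choice $\alpha_2,\beta_2,\gamma_2\in\{0,\pi\}$ gives an \emph{even} multiple of $\pi$; no sign pattern works. Concretely, with $n=m=2$, $A=\ii$, $B=C=1$ one has $m\alpha+(n+m)\beta-(n+2m)\gamma=\pi$, while $2\alpha_2+4\beta_2-6\gamma_2\in 2\pi\mathbb{Z}$ for all eight sign patterns. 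So the bookkeeping you promise does not close in general; it only goes through when at least one of $m$, $n+m$, $n+2m$ is odd, equivalently when $\gcd(m,n)$ is odd.
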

The proof is provided in 
Subsection~\ref{sub:cor:real}.

The following corollary is the analogous of Statement~III in p.~40 of~\cite{eger}.
\begin{corollary}[Different roots with the equal modulus]\label{cor:difroots}
\hfill

\noindent
Let $h(z)=Az^{n+m}+B\overline{z}^m +C$ for all $z\in \mathbb{C}$ be a harmonic trinomial whose coefficients $A$, $B$ and $C$ are non-zero complex numbers.
We consider the polar representation of $A$, $
B$ and $C$, that is, $A= |A|e^{\ii \alpha}$, $B= |B|e^{\ii \beta}$, $C= |C|e^{\ii \gamma}$. Assume that $h$ has at least two different roots with the same modulus. Then $h$ is Egerv\'ary equivalent to a harmonic trinomial with real coefficients.
\end{corollary}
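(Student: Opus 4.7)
The plan is to verify condition~(ii) of Corollary~\ref{cor:real}, namely
\[
m\alpha+(n+m)\beta-(n+2m)\gamma\equiv 0\quad \mop,
\]
and then invoke that corollary to conclude that $h$ is Egerv\'ary equivalent to a harmonic trinomial with real coefficients.

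Let $z_1,z_2$ be two distinct roots of $h$ with common modulus $r>0$, write $z_j=re^{\ii\theta_j}$, and set $\Delta:=\theta_1-\theta_2$ and $\Sigma:=\theta_1+\theta_2$. The key observation I will exploit is that rewriting $h(z_j)=0$ in the form $Az_j^{n+m}+B\overline{z_j}^m=-C$ and taking the squared modulus of both sides yields
\[
|A|^2 r^{2(n+m)}+|B|^2 r^{2m}+2\operatorname{Re}\!\bigl(A\overline{B}\,z_j^{n+2m}\bigr)=|C|^2,
\]
so that $\operatorname{Re}(A\overline{B}\,z_j^{n+2m})$ is independent of $j$. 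Performing the analogous squaring with the two other regroupings $B\overline{z_j}^m+C=-Az_j^{n+m}$ and $Az_j^{n+m}+C=-B\overline{z_j}^m$ shows similarly that $\operatorname{Re}(B\overline{C}\,\overline{z_j}^m)$ and $\operatorname{Re}(A\overline{C}\,z_j^{n+m})$ are also independent of $j$. Hence each of
\[
A\overline{B}\bigl(z_1^{n+2m}-z_2^{n+2m}\bigr),\qquad B\overline{C}\bigl(\overline{z_1}^m-\overline{z_2}^m\bigr),\qquad A\overline{C}\bigl(z_1^{n+m}-z_2^{n+m}\bigr)
\]
is purely imaginary.

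Next, the identity $e^{\ii x}-e^{\ii y}=2\ii\sin((x-y)/2)e^{\ii(x+y)/2}$ factors each of these three expressions as a real multiple of $2\ii$ times a sine and a unit complex number; in the generic case where all three sine factors $\sin((n+2m)\Delta/2)$, $\sin(m\Delta/2)$, $\sin((n+m)\Delta/2)$ are nonzero, the pure-imaginary condition forces
\[
(n+2m)\Sigma\equiv 2(\beta-\alpha),\quad m\Sigma\equiv 2(\beta-\gamma),\quad (n+m)\Sigma\equiv 2(\gamma-\alpha)\qquad \mo.
\]
Multiplying the second of these by $n+m$ and the third by $m$ and subtracting to eliminate $\Sigma$ yields $(n+m)(\beta-\gamma)\equiv m(\gamma-\alpha)\mop$, which rearranges to the desired identity $m\alpha+(n+m)\beta-(n+2m)\gamma\equiv 0 \mop$. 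Corollary~\ref{cor:real} then concludes the argument.

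The main obstacle I foresee is handling the degenerate situations in which some of the three sine factors vanish. A short arithmetic check (based on $(n+2m)\Delta=m\Delta+(n+m)\Delta$) shows that whenever two of $m\Delta$, $(n+m)\Delta$, $(n+2m)\Delta$ belong to $2\pi\mathbb{Z}$ then so does the third, so the genuinely problematic case reduces to $\Delta\in(2\pi/\gcd(m,n))\mathbb{Z}$. This corresponds precisely to $z_2=\omega z_1$ for some $\gcd(m,n)$-th root of unity $\omega$ — a pair that yields no new information, since the identity $h(\omega z)=h(z)$ holds automatically for every such $\omega$ whenever $\gcd(m,n)>1$. Closing the proof will therefore require a careful selection of the pair $(z_1,z_2)$, or producing a further root on the same circle outside the cyclic orbit of $z_1$, so that at least two of the three sine factors are nonzero and the argument above applies.
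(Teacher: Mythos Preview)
Your approach differs from the paper's. The paper first rotates so that one root is $\zeta_1=r\in\mathbb{R}$, then solves the $2\times 2$ linear system $h(r)=h(re^{\ii\theta})=0$ explicitly for $A$ and $B$ in terms of $C$, obtaining
\[
A=-\frac{C}{r^{n+m}}\,\frac{e^{\ii m\theta}-1}{e^{\ii(n+2m)\theta}-1},\qquad
B=-\frac{C}{r^{m}}\,\frac{e^{\ii m\theta}(e^{\ii(n+m)\theta}-1)}{e^{\ii(n+2m)\theta}-1},
\]
and then computes $A^mB^{n+m}/C^{n+2m}$ directly, using $e^{\ii t}-1=2\ii\sin(t/2)e^{\ii t/2}$ to see that it is real, so that Corollary~\ref{cor:real}(iii) applies. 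Your route via squared moduli and purely-imaginary differences is more symmetric (it never singles out one root as real) and avoids solving the linear system, but both arguments are equivalent in the generic regime. Your remark that any two of the three congruences suffice (so that a single vanishing sine is harmless) is correct and worth making explicit.

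The gap you flag at the end, however, cannot be closed in the way you suggest. When $d:=\gcd(n,m)>1$ and $\omega=e^{2\pi\ii/d}$, one has $h(\omega z)=h(z)$ identically; hence \emph{every} root $\zeta$ of $h$ produces $d$ distinct roots $\zeta,\omega\zeta,\ldots,\omega^{d-1}\zeta$ of common modulus, so the hypothesis of the corollary is satisfied by every harmonic trinomial with those exponents. But by Corollary~\ref{cor:real} the conclusion fails whenever $A^mB^{n+m}/C^{n+2m}\notin\mathbb{R}$. Concretely, for $n=m=2$ take $h(z)=z^4+e^{\ii\pi/8}\,\overline{z}^2+1$: then $h(-z)=h(z)$, so any root $\zeta\neq 0$ pairs with $-\zeta$, yet $A^mB^{n+m}/C^{n+2m}=e^{\ii\pi/2}=\ii$ is not real. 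Thus there is in general no ``further root outside the cyclic orbit'' to fall back on.

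The paper's proof shares exactly this gap: its formulas divide by $e^{\ii(n+2m)\theta}-1$, and (using $C\neq 0$ and the consistency of the system) this quantity vanishes precisely when $\theta\in(2\pi/d)\mathbb{Z}$, i.e.\ in the very case you isolated. So your argument is as complete as the paper's; both are valid once one adds the hypothesis that the two equal-modulus roots are not related by a $d$-th root of unity (in particular whenever $\gcd(n,m)=1$).
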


The proof is presented in Subsection~\ref{sec:cor:difroots}.

In the sequel, we remark that the converse of the Statement~III in p.~40 of~\cite{eger} does not hold true in general.

\begin{remark}[Converse of Corollary~\ref{cor:difroots} for degree three or more]
\hfill

\noindent
Let $h(z)=z^2+\frac{\sqrt{3}}{6}\overline{z}-\frac{1}{4}$ for all $z\in \mathbb{C}$.
By Corollary~1.4 in~\cite{Barrera} we have that $h$ has at most four different roots in $\mathbb{C}$. In fact, a straightforward computation yields that $h$ has only two roots, which are given by
\[
z_1:=\frac{-1-\sqrt{13}}{2\sqrt{12}}\approx -0.664\quad \textrm{ and } \quad  z_2:=\frac{-1+\sqrt{13}}{2\sqrt{12}}\approx 0.376.
\]
Since  $|z_1|\neq |z_2|$, we have that the converse of Corollary~\ref{cor:difroots} is not valid when $n+m=2$, i.e., $n=m=1$.

For $n+m\in \mathbb{N}\setminus\{1,2\}$ we claim that all the roots of  $h(z)=Az^{n+m}+B\overline{z}^m +C$ for all $z\in \mathbb{C}$, where $A,B,C\in \mathbb{R}\setminus\{0\}$ cannot be real numbers. Indeed, by Descartes' rule of signs we have that $h$ has at most two real roots and hence $h$ has at least one complex root $\zeta$. It is not hard to see that $\overline{\zeta}$ is also a root of $h$ and hence the converse of Corollary~\ref{cor:difroots} holds true.
\end{remark}

The following corollary is the analogous of Statement~IV in p.~40 of~\cite{eger}.

\begin{corollary}[Root with multiplicity at least two]\label{cor:leasttwo}
\hfill

\noindent
Let $h(z)=Az^{n+m}+B\overline{z}^m +C$ for all $z\in \mathbb{C}$ be a harmonic trinomial whose coefficients $A$, $B$ and $C$ are non-zero complex numbers.
We consider the polar representation of $A$, $
B$ and $C$, that is, $A= |A|e^{\ii \alpha}$, $B= |B|e^{\ii \beta}$, $C= |C|e^{\ii \gamma}$.  Assume that $h$ has a root of multiplicity at least two with modulus $r$. Then $h$ is Egerv\'ary equivalent to a harmonic trinomial with real coefficients. Moreover, 
\[\frac{A^mB^{n+m}}{C^{n+2m}} = \frac{(-1)^{n+m}}{r^{2m(n+m)}}\frac{m^m(n+m)^{n+m}}{n^{n+2m}}.
\]
\end{corollary}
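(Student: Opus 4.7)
My plan is to convert the multiplicity-at-least-two hypothesis into explicit modulus and argument relations on $(A,B,C,z_0)$, then extract both the Egerv\'ary equivalence (via Corollary~\ref{cor:real}) and the exact value of $A^mB^{n+m}/C^{n+2m}$ directly from those relations. Writing $z_0=re^{\ii\theta_0}$ and using the polar forms of $A,B,C$, I first appeal to the analysis of multiple roots for harmonic trinomials in~\cite{Barrera} (in particular Lemma~2.11) to record that the hypothesis forces the three summands $Az_0^{n+m}$, $B\overline{z_0}^m$, $C$ of $h(z_0)=0$ to form a degenerate triangle, with $Az_0^{n+m}$ and $C$ parallel and $B\overline{z_0}^m$ antiparallel. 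Concretely, this translates into the two modulus identities $(n+m)|A|\,r^{n}=m|B|$ and $n|A|\,r^{n+m}=m|C|$, together with the angular alignments
\[
\alpha+(n+m)\theta_0\equiv \gamma, \qquad \beta-m\theta_0\equiv \gamma+\pi \qquad \mo.
\]
The first modulus identity is just the Jacobian-vanishing condition for $h$ at $z_0$; the second identity together with the angular alignments is what pins down the correct degenerate configuration.

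Eliminating $\theta_0$ by taking $m$ times the first alignment plus $(n+m)$ times the second produces
\[
m\alpha+(n+m)\beta-(n+2m)\gamma\equiv (n+m)\pi \qquad \mo,
\]
which in particular is $\equiv 0 \quad \mop$. By Corollary~\ref{cor:real}, $h$ is Egerv\'ary equivalent to a harmonic trinomial with real coefficients. For the explicit ratio, the argument computation just performed contributes the sign $(-1)^{n+m}$; for the modulus, substituting $|B|=\tfrac{n+m}{m}|A|r^{n}$ and $|C|=\tfrac{n}{m}|A|r^{n+m}$ into $|A|^m|B|^{n+m}/|C|^{n+2m}$ makes the powers of $|A|$ cancel exactly, the $r$-factors combine as $r^{n(n+m)-(n+m)(n+2m)}=r^{-2m(n+m)}$, and the integer constants collapse to $m^m(n+m)^{n+m}/n^{n+2m}$, yielding precisely the stated identity.

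The main obstacle is the very first step: extracting the second modulus identity $n|A|r^{n+m}=m|C|$ beyond the Jacobian relation, and the precise angular alignment, from the hypothesis of multiplicity at least two. A root satisfying only $h(z_0)=0$ together with the Jacobian-vanishing condition still leaves a one-parameter family of admissible $|C|$'s; selecting the particular degenerate triangle that corresponds to the corollary's formula requires the finer multiplicity hypothesis, and this is where I would lean on the quoted lemma from~\cite{Barrera}. Once those four relations are in hand, the remainder of the proof is arithmetic bookkeeping.
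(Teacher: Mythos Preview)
Your arithmetic is correct once the four relations (the two modulus identities and the two angular alignments) are in hand, and your invocation of Corollary~\ref{cor:real} for the first conclusion is the right move. The problem is precisely where you say it is: you do not actually derive those relations, you only announce them and point to Lemma~2.11 of~\cite{Barrera}. That lemma, as it is used in this paper (see the proof of Theorem~\ref{lem:geometric}), concerns the link between the existence of a root of a given modulus and the (non-)degeneracy of the associated triangle; it is not a statement about multiplicity, and it will not by itself produce the identity $n|A|r^{n+m}=m|C|$ or the specific parallel/antiparallel configuration you claim. Nor does the Jacobian condition alone suffice: vanishing of the Jacobian gives only the single real equation $(n+m)|A|r^{n}=m|B|$ and no argument information, so a one-parameter family of $|C|$'s remains, exactly as you note. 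As written, the key step is an assertion, not an argument.

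The paper closes this gap by a simple device you did not use: rotate the root to the positive real axis (both the Egerv\'ary-equivalence conclusion and the quantity $A^{m}B^{n+m}/C^{n+2m}$ are invariant under such a rotation, and so is the modulus $r$), and then observe that on the real line $h$ agrees with the analytic curve $\gamma(x)=Ax^{n+m}+Bx^{m}+C$. ``Multiplicity at least two'' at the real point $r$ then reads as the two complex equations $\gamma(r)=0$ and $\gamma'(r)=0$, which solve immediately to $A=\dfrac{mC}{n\,r^{n+m}}$ and $B=-\dfrac{(n+m)C}{n\,r^{m}}$. These are exactly your four relations packaged together (with $\theta_0=0$), and the rest is the same bookkeeping you carried out. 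If you want to keep your formulation with a general $z_0=re^{\ii\theta_0}$, the honest replacement for the vague citation is to differentiate $t\mapsto h(te^{\ii\theta_0})$ at $t=r$; setting that radial derivative to zero is the rotated form of $\gamma'(r)=0$ and yields your modulus and angular relations in one stroke.
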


The proof is provided in Subsection~\ref{sec:cor:leasttwo}.

The analogous of the following theorems are not given in~\cite{eger}. 
We state them here since they are interesting on their own.
They are deduced 
using 
Theorem~\ref{th:egervaryI} together with 
the following results
in~\cite{Barrera}, Lemma~2.6, Lemma~2.11, Lemma~A.3 and Proposition~2.3.
\begin{theorem}[Geometric degenerate condition]\label{lem:geometric}
\hfill

\noindent
Let $h(z)=Az^{n+m}+B\overline{z}^m +C$ for all $z\in \mathbb{C}$ be a harmonic trinomial whose coefficients $A$, $B$ and $C$ are non-zero complex numbers.
We consider the polar representation of $A$, $
B$ and $C$, that is, $A=|A|e^{\ii \alpha}$, $B= |B|e^{\ii \beta}$, $C= |C|e^{\ii \gamma}$. Assume that  there exists a root of $h$ with modulus $r$ such that  
$|A|r^{n+m}$, $|B|r^{m}$ and $|C|$ are the side lengths of some degenerate triangle. Then $h$ is Egerv\'ary equivalent to a harmonic trinomial  with real coefficients of the form
\begin{equation}\label{eq:guv}
g_{u,v}(z):= u|A|z^{n+m}+v|B|\overline{z}^m+|C|,\quad  z\in \mathbb{C},
\end{equation} for some $u,v\in \{-1,1\}$.
Moreover, if $n$ and $m$ are co-prime numbers, then 
\begin{itemize}
\item[(a)] for $|C|= |A|r^{n+m}+|B|r^m$ it follows that $h$ is Egerv\'ary equivalent to 
\[
g(z):= |A|z^{n+m}+|B|\overline{z}^m -|C|, \quad z\in \mathbb{C},
\]
\item[(b)] for $|A|r^{n+m}=|B|r^m+|C|$, it follows that $h$ is Egerv\'ary equivalent to 
\[
g(z):= |A|z^{n+m}-|B|\overline{z}^m-|C|, \quad z\in \mathbb{C},
\]
\item[(c)] for $|B|r^{m}= |A|r^{m+n}+|C|$, it follows that $h$ is Egerv\'ary equivalent to 
\[
g(z):=|A|z^{n+m}-|B|\overline{z}^m+|C|,\quad z\in \mathbb{C}.
\]
\end{itemize}
\end{theorem}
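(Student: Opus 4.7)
The plan is to extract the geometric content of the degenerate-triangle hypothesis from the root equation $A\zeta^{n+m}+B\overline{\zeta}^m+C=0$ at a root $\zeta=re^{\ii\phi}$ of $h$ with modulus $r$. First I would observe that the three non-zero complex numbers $w_1:=A\zeta^{n+m}$, $w_2:=B\overline{\zeta}^m$ and $w_3:=C$ sum to zero, have respective moduli $|A|r^{n+m}$, $|B|r^m$ and $|C|$, and by hypothesis one of these moduli equals the sum of the other two. Elementary planar vector geometry then forces $w_1,w_2,w_3$ to be collinear through the origin, so that their arguments are pairwise congruent modulo $\pi$; explicitly,
\begin{equation}
\alpha+(n+m)\phi \;\equiv\; \beta-m\phi \;\equiv\; \gamma \quad \mop.
\end{equation}

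Next I would build the Egerv\'ary equivalence explicitly. Taking $c:=e^{-\ii\gamma}$ and $\delta:=\phi$, a direct computation gives
\begin{equation}
c\,h(e^{\ii\delta}z)=\bigl(cAe^{\ii(n+m)\phi}\bigr)z^{n+m}+\bigl(cBe^{-\ii m\phi}\bigr)\overline{z}^m+cC,
\end{equation}
and the angular identity above shows that each of the three coefficients has argument congruent to $0$ modulo $\pi$, hence is real, while their moduli are $|A|$, $|B|$ and $|C|$, respectively. Therefore $c\,h(e^{\ii\delta}z)=g_{u,v}(z)$ for some $u,v\in\{-1,1\}$, which proves the first assertion. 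The same conclusion can be read off alternatively from Theorem~\ref{th:egervaryI}: the ratio condition~\eqref{eq:cociente} is automatic, and~\eqref{eq:mod} reduces to the displayed identity once the arguments of $g_{u,v}$ are recorded as $0$ or $\pi$.

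For the specific cases~(a)--(c), I would write $w_1=\rho_1 e^{\ii\theta}$, $w_2=\epsilon_2\rho_2 e^{\ii\theta}$, $w_3=\epsilon_3\rho_3 e^{\ii\theta}$ with $\rho_j:=|w_j|$ and $\epsilon_2,\epsilon_3\in\{-1,+1\}$. The vanishing $w_1+w_2+w_3=0$ then forces $\epsilon_2=+1,\epsilon_3=-1$ in case~(a) (where $\rho_3=\rho_1+\rho_2$); $\epsilon_2=\epsilon_3=-1$ in case~(b) (where $\rho_1=\rho_2+\rho_3$); and $\epsilon_2=-1,\epsilon_3=+1$ in case~(c) (where $\rho_2=\rho_1+\rho_3$). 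Carrying these signs through the calculation of the previous paragraph yields $c\,h(e^{\ii\delta}z)=\epsilon_3|A|z^{n+m}+\epsilon_2\epsilon_3|B|\overline{z}^m+|C|$, and an optional multiplication by $-1$ delivers the prescribed normal form $g$ in each subcase.

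The main obstacle is supplying the rigidity results for real-coefficient harmonic trinomials that underlie the sharpening to the canonical forms in (a)--(c) under the coprimality hypothesis $\gcd(n,m)=1$. These are precisely Lemma~2.6, Lemma~2.11, Lemma~A.3 and Proposition~2.3 of~\cite{Barrera}, referenced in the statement, and their role is to certify that the sign pattern read off from the degenerate configuration is the one that genuinely represents the Egerv\'ary class of $h$ among the four possibilities $g_{u,v}$, so that the case split on which side of the triangle is the longest translates faithfully into the listed algebraic sign pattern on the coefficients.
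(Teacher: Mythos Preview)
Your approach is correct and genuinely different from the paper's. The paper derives the angular relation~\eqref{eq:reales} by invoking the contrapositive of Lemma~2.11 of~\cite{Barrera} (which requires coprimality, forcing a separate reduction for $\gcd(n,m)>1$), and then for~(a)--(c) argues indirectly: it shows via Lemma~A.3 of~\cite{Barrera} that the ``wrong'' $g_{u,v}$'s have no root of modulus $r$, so $h$, which does have such a root, must be equivalent to the ``right'' one. You instead extract everything from the single elementary fact that three non-zero vectors summing to zero with one modulus equal to the sum of the other two are collinear; this directly gives $\alpha+(n+m)\phi\equiv\beta-m\phi\equiv\gamma\ \mop$ and hands you both the explicit equivalence to some $g_{u,v}$ and, through the sign bookkeeping $\epsilon_2,\epsilon_3$, the specific normal form in each of~(a)--(c). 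Your route is shorter, self-contained, and in fact shows that the coprimality hypothesis in~(a)--(c) is unnecessary.

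Your last paragraph undersells what you have already done. You write that the ``main obstacle'' is certifying, via the lemmas of~\cite{Barrera}, that the sign pattern you read off is the one that genuinely represents the Egerv\'ary class. But there is no obstacle: you have exhibited an explicit $c$ and $\delta$ with $c\,h(e^{\ii\delta}z)=\epsilon_3|A|z^{n+m}+\epsilon_2\epsilon_3|B|\overline{z}^m+|C|$, and after an optional global sign flip this \emph{is} the claimed $g$ in each case. That different $g_{u,v}$'s may or may not lie in the same Egerv\'ary class (as in the paper's display~\eqref{eq:casoseger}) is irrelevant to establishing the equivalence; it would only matter if you were trying to argue by elimination, as the paper does. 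You can simply delete the final paragraph.
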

The proof is given in Subsection~\ref{sec:lem:geometric}.

\begin{theorem}[Common root with the same modulus]
\label{th:samemodulus}
\hfill

\noindent
Let $h_1$ and $h_2$ be the harmonic polynomials given in~\eqref{eq:f1} 
and~\eqref{eq:f2}, respectively. Assume 
that~\eqref{eq:cociente} holds true.
In addition, assume that there exists $\zeta_1$ and $\zeta_2$ roots of 
$h_1$ and $h_2$, respectively, and satisfying $|\zeta_1|=|\zeta_2|$. Then it follows that
\begin{equation}\label{ecn:one}
m(\alpha_1\pm \alpha_2)+(n+m)(\beta_1\pm \beta_2) - (n+2m)(\gamma_1\pm \gamma_2)\equiv 0 \quad \mo.
\end{equation}
In particular,  $h_1$ and $h_2$ are  Egerv\'ary equivalent.
\end{theorem}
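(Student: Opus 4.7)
The plan is to encode the common-modulus hypothesis via polar coordinates and then invoke a rigidity/congruence argument for the two triples of complex numbers $(A_j\zeta_j^{n+m},B_j\overline{\zeta_j}^m,C_j)$ obtained by evaluating at the roots. Write $\zeta_j=re^{\ii\theta_j}$ with $r:=|\zeta_1|=|\zeta_2|$, and set $\lambda:=|A_1/A_2|=|B_1/B_2|=|C_1/C_2|$ from~\eqref{eq:cociente}. The identity $h_j(\zeta_j)=0$ produces, for each $j\in\{1,2\}$, a vanishing sum of three complex numbers whose componentwise moduli are proportional across $j=1,2$ with ratio $\lambda$.

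The key geometric step is the SSS-type statement: two closed complex triangles with proportional side lengths differ by a rotation, possibly combined with a conjugation/reflection. The corresponding algebraic rigidity is exactly what the cited results in~\cite{Barrera} (Lemma~2.6, Lemma~2.11, Lemma~A.3 and Proposition~2.3) provide. Applying them to the vanishing triples above, I would obtain $\phi\in\mathbb{R}$ such that either the direct relations
\[
A_1\zeta_1^{n+m}=\lambda e^{\ii\phi}A_2\zeta_2^{n+m},\quad B_1\overline{\zeta_1}^m=\lambda e^{\ii\phi}B_2\overline{\zeta_2}^m,\quad C_1=\lambda e^{\ii\phi}C_2
\]
hold (Case~1), or the conjugated relations
\[
A_1\zeta_1^{n+m}=\lambda e^{\ii\phi}\overline{A_2}\,\overline{\zeta_2}^{n+m},\quad B_1\overline{\zeta_1}^m=\lambda e^{\ii\phi}\overline{B_2}\zeta_2^m,\quad C_1=\lambda e^{\ii\phi}\overline{C_2}
\]
hold (Case~2).

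Next, take arguments modulo $2\pi$. In Case~1, using the third identity to eliminate $\phi\equiv\gamma_1-\gamma_2$, the first two identities read
\[
\alpha_1-\alpha_2+(n+m)(\theta_1-\theta_2)\equiv\gamma_1-\gamma_2,\qquad \beta_1-\beta_2-m(\theta_1-\theta_2)\equiv\gamma_1-\gamma_2\quad \mo.
\]
Forming $m$ times the first plus $(n+m)$ times the second, the contributions proportional to $\theta_1-\theta_2$ cancel and I arrive at the minus-sign version of~\eqref{ecn:one}. Case~2 is treated identically except that $\theta_1+\theta_2$ replaces $\theta_1-\theta_2$ and $\gamma_1+\gamma_2$ replaces $\gamma_1-\gamma_2$, producing the plus-sign version. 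Either suffices for the hypothesis of Theorem~\ref{th:egervaryI}, whence, combined with~\eqref{eq:cociente}, the Egerv\'ary equivalence of $h_1$ and $h_2$ follows.

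The main obstacle is the congruence dichotomy in the second paragraph: a priori, two vanishing triples with matching moduli could have arguments that differ in uncontrolled ways, and it is the rigidity of closed triangles anchored at the origin that forces the relation up to a common rotation and possibly a global conjugation. Once that step is imported from~\cite{Barrera}, the remaining work is a routine elimination in $\mathbb{R}/2\pi\mathbb{Z}$; in particular, the coefficients $m$ and $n+m$ in the linear combination are exactly those that kill the auxiliary variable $\theta_1\mp\theta_2$.
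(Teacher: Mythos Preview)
Your argument is correct and in fact more streamlined than the paper's. The paper first reduces to the case $\gcd(n,m)=1$, then splits into three subcases according to whether $|A_1|r^{n+m}$, $|B_1|r^m$, $|C_1|$ are the side lengths of a non-degenerate triangle, a degenerate triangle, or no triangle at all; it then invokes Proposition~2.3 of~\cite{Barrera} (via the ``pivotals'' $P_{*,j}$ and $\omega_{*,j}$) in the non-degenerate case and Theorem~\ref{lem:geometric} in the degenerate case, followed by a separate lift back to the non-coprime setting. Your route bypasses all of this: you apply the elementary SSS congruence directly to the two vanishing triples $(A_j\zeta_j^{n+m},B_j\overline{\zeta_j}^m,C_j)$, obtain the rotation/reflection dichotomy in one stroke, and then eliminate $\theta_1\mp\theta_2$ by the natural $m$-and-$(n+m)$ linear combination. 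This handles the degenerate-triangle case and the non-coprime case simultaneously, with no extra bookkeeping.

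One small remark: the SSS step you need---that two triples $a+b+c=0$ and $a'+b'+c'=0$ with $|a|=\lambda|a'|$, $|b|=\lambda|b'|$, $|c|=\lambda|c'|$ satisfy $(a,b,c)=\lambda e^{\ii\phi}(a',b',c')$ or $(a,b,c)=\lambda e^{\ii\phi}(\overline{a'},\overline{b'},\overline{c'})$---is an elementary plane-geometry fact (rotate so that $c=\lambda c'$ is real, then the two circles determining $a'$ meet in a conjugate pair). You attribute it to Lemma~2.6, Lemma~2.11, Lemma~A.3 and Proposition~2.3 of~\cite{Barrera}, but those results are tailored to the pivotal machinery the paper uses and do not state this dichotomy directly; it would be cleaner to prove the SSS step in two lines yourself rather than invoke them.
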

The proof is provided in Subsection~\ref{sec:th:samemodulus}.

The rest of the manuscript is organized as follows. In Section~\ref{sec:proofs}
we provide the proofs of the results given in Section~\ref{sec:intro}. More precisely, in Subsection~\ref{sub:th:egervaryI} we show Theorem~\ref{th:egervaryI}, in Subsection~
\ref{sub:cor:real} we provide the proof of
Corollary~\ref{cor:real}, in Subsection~\ref{sec:cor:difroots} we give the proof of
Corollary~\ref{cor:difroots}, in Subsection~\ref{sec:cor:leasttwo}
we provide the proof of Corollary~\ref{cor:leasttwo}, in Subsection~\ref{sec:lem:geometric} we show Theorem~\ref{lem:geometric} and in Subsection~\ref{sec:th:samemodulus} we prove 
Theorem~\ref{th:samemodulus}. Finally, in Appendix~\ref{append} we state auxiliary results that have been used throughout the manuscript.

\section{\textbf{Proofs of the results}}\label{sec:proofs}

In this section, we present the proofs of the results stated in Subsection~\ref{sub:preli}.

\subsection{\textbf{Proof of Theorem~\ref{th:egervaryI}}}\label{sub:th:egervaryI}
\hfill
\smallskip

\noindent
Assume that $h_1$ and $h_2$ are Egerv\'ary equivalent. By Definition~\ref{def:eger} it is not hard to see that there exist a non-zero complex number $c$ and a real number $\delta$ satisfying
\begin{equation}\label{ec:uno}
h_1(e^{-\ii \delta}z)=ch_2(z)\quad \textrm{ for all }\quad z\in \mathbb{C}
\end{equation}
or 
\begin{equation}\label{ec:dos}
\overline{h}_1(e^{-\ii \delta}z)=\overline{c} h_2(z)\quad \textrm{ for all }\quad z\in \mathbb{C}.
\end{equation}
By~\eqref{ec:uno} we have 
\begin{equation}\label{eq:cuenta}
\frac{A_1}{A_2}= \frac{B_1}{B_2} e^{\ii (n+2m)\delta}= \frac{C_1}{C_2}e^{\ii (n+m)\delta},
\end{equation}
which easily implies~\eqref{eq:cociente}.
By~\eqref{eq:cuenta} we have
\[
\alpha_1-\alpha_2 \equiv \beta_1-\beta_2 +(2m+n)\delta \equiv \gamma_1-\gamma_2 +(n+m)\delta  \quad\mo.
\]
In particular, we obtain
\begin{equation}\label{eq:doscong}
\begin{split}
\gamma_1-\gamma_2 &\equiv \beta_1-\beta_2 +m\delta \quad \mo  \quad \textrm{ and }\\
\alpha_1-\alpha_2 &\equiv \beta_1-\beta_2 +(2m+n)\delta \quad \mo.
\end{split}
\end{equation}
By~\eqref{eq:doscong} we have 
\begin{equation}\label{eq:uno}
\begin{split}
&\hspace{1cm}
m(\alpha_1- \alpha_2)+(n+m)(\beta_1- \beta_2) - (n+2m)(\gamma_1- \gamma_2)\\
&\quad
\hspace{1cm}
\equiv 
m(\beta_1-\beta_2 +(2m+n)\delta)+(n+m)(\beta_1- \beta_2) - (n+2m)(\beta_1-\beta_2 +m\delta)\\
&\quad
\hspace{1cm}
\equiv 0
 \quad 
\mo.
\end{split}
\end{equation}
Analogously,~\eqref{ec:dos} implies
\begin{equation}\label{eq:dos}
\begin{split}
m(\alpha_1+ \alpha_2)+(n+m)(\beta_1+ \beta_2) - (n+2m)(\gamma_1+ \gamma_2)\equiv  0
 \quad \mo.
\end{split}
\end{equation}
By~\eqref{eq:uno} and~\eqref{eq:dos} 
we deduce~\eqref{eq:mod}.

In the sequel, we assume 
that~\eqref{eq:cociente} and~\eqref{eq:mod} are valid. In particular, we have
\begin{equation}\label{eq:la}
\frac{|A_1|}{|A_2|}=\frac{|B_1|}{|B_2|}=\frac{|C_1|}{|C_2|}=:r
\end{equation}
and
\begin{equation}\label{eq:angular}
m(\alpha_1- \alpha_2)+(n+m)(\beta_1-\beta_2)- (n+2m)(\gamma_1- \gamma_2)\equiv 0\quad \mo.
\end{equation}
Without loss of generality, we can assume that 
$r=1$. Since $r=1$,~\eqref{eq:la} 
 implies the existence of real numbers $\theta_1$, $\theta_2$ and $\theta_3$ such that 
\begin{equation}\label{eq:A1A2}
A_1= e^{\ii \theta_1} A_2, \quad  B_1= e^{\ii \theta_2}  B_2 \quad  \textrm{ and } \quad C_1= e^{\ii \theta_3 } C_2.
\end{equation}
Then we have
\begin{equation}\label{eq:angulos}
\alpha_1-\alpha_2 \equiv \theta_1 \quad \mo, \quad 
\beta_1-\beta_2 \equiv \theta_2 \quad \mo \quad \textrm{ and } \quad 
\gamma_1-\gamma_2 \equiv \theta_3 \quad \mo. 
\end{equation}
By~\eqref{eq:angular} and~\eqref{eq:angulos} we obtain
\begin{equation}\label{diofanto1}
m(\theta_1-\theta_3)+(n+m)(\theta_2-\theta_3)
\equiv 0 \quad \mo.
\end{equation}
By Lemma~\ref{lem:parame} in Appendix~\ref{append} we have that the solutions 
of~\eqref{diofanto1} are parametrized   as follows
\begin{equation}\label{eq:thetas}
\theta_1 \equiv \theta_3 +(n+m)\delta \quad  \mo\quad \textrm{ and }\quad  \theta_2\equiv \theta_3 - m\delta \quad  \mo
\end{equation}
for some $\delta\in \mathbb{R}$. 
By~\eqref{eq:A1A2} and~\eqref{eq:thetas} we obtain
\begin{equation}
\begin{split}
h_1(z)&= A_1z^{n+m}+B_1\overline{z}^m +C_1
= A_2e^{\ii \theta_1}z^{n+m}+B_2e^{\ii\theta_2}\overline{z}^m +C_2e^{\ii \theta_3}\\
&=e^{\ii \theta_3}(A_2 e^{\ii (n+m)\delta}z^{n+m}+B_2e^{-\ii m\delta}\overline{z}^m+C_2)
=e^{\ii \theta_3}f_2(e^{\ii \delta}z)
\end{split}
\end{equation}
for all $z\in \mathbb{C}$.
In the case of
\[
m(\alpha_1+ \alpha_2)+(n+m)(\beta_1+\beta_2)- (n+2m)(\gamma_1+ \gamma_2)\equiv 0 \quad \mo
\]
the proof is analogous and we omit it.
\qedw

\subsection{\textbf{Proof of Corollary~\ref{cor:real}}}\label{sub:cor:real}
\hfill
\smallskip

\noindent
We start proving that~(i) implies~(ii).
Since~(i) holds true,  $h$ is Egerv\'ary equivalent  to a harmonic trinomial with real coefficients $g$.
We write $g(z)=A_1z^{n+m}+B_1\overline{z}^m+C_1$, $z\in \mathbb{C}$, where $A_1$, $B_1$ and $C_1$ are real numbers. 
In particular, 
\begin{equation}\label{eq:a1b1r1}
\alpha_1 \equiv 0\quad \mop,\quad \beta_1 \equiv 0\quad \mop\quad \textrm{ and }\quad 
\gamma_1 \equiv 0\quad \mop,
\end{equation} where  $\alpha_1$, $\beta_1$ and $\gamma_1$ are the arguments $A_1$, $B_1$ and $C_1$, respectively. By Theorem~\ref{th:egervaryI} (applied to $h$ and $g$) we have 
\begin{equation}
\begin{split}
m(\alpha\pm \alpha_1)+(n+m)(\beta\pm \beta_1)-(n+2m)(\gamma\pm \gamma_1)\equiv 0\quad \mo,
\end{split}
\end{equation}
which implies
\begin{equation}\label{ecnp}
m\alpha +(n+m)\beta -(n+2m)\gamma \equiv \mp m\alpha_1 \mp (n+m)\beta_1\pm (n+2m)\gamma_1 \quad \mo.
\end{equation}
By~\eqref{eq:a1b1r1}
for any $\ell_1,\ell_2,\ell_3\in \{-1,1\}$ we obtain
\begin{equation}\label{ec:19}
\ell_1 m\alpha_1 +\ell_2 (n+m)\beta_1+\ell_3 (n+2m)\gamma_1
\equiv 0 \quad \mop.
\end{equation}
Hence~\eqref{ecnp} with the help 
of~\eqref{ec:19} yields~\eqref{eq:reales}. 
The proof of~(i) implies~(ii) is finished.

Now, we show that~(ii) implies~(i).
Since~\eqref{eq:reales} holds true,
 we have
\begin{equation}\label{eq:reales1}
m(\alpha-\gamma)+(n+m)(\beta -\gamma)\equiv 0 \quad \mop.
\end{equation}
By Lemma~\ref{lem:paramedos} in Appendix~\ref{append} we obtain
\begin{equation}\label{eq:deltar}
\alpha\equiv \gamma +(n+m)\delta\quad \mop\quad \textrm{ and }  \quad\beta\equiv \gamma -m\delta\quad\mop
\end{equation}
for some $\delta\in \mathbb{R}$.
Then for any $z\in \mathbb{C} $ we obtain
\begin{equation}\label{eq:hequ}
\begin{split}
h(z)&=|A|e^{\ii \alpha}z^{n+m}+|B|
e^{\ii \beta}\overline{z}^m+|C|e^{\ii \gamma}\\
&= e^{\ii\gamma}(|A|e^{\ii(\alpha-\gamma)}z^{n+m}+|B|e^{\ii(\beta-\gamma)}\overline{z}^m+|C|)\\
&= e^{\ii\gamma}(u|A|e^{\ii (n+m)\delta}z^{n+m}+v|B|e^{-\ii m\delta}\overline{z}^m+|C|)\\
&=e^{\ii\gamma}(u|A|(e^{\ii\delta}z)^{n+m}+v|B|(\overline{e^{\ii\delta}z})^m+|C|),
\end{split}
\end{equation}
where $u,v\in \{-1,1\}$.
By~\eqref{eq:hequ} we deduce that $h$ is Egerv\'ary equivalent to the harmonic trinomial
\begin{equation}\label{formareal}
g(z):=u|A|z^{n+m}+v|B|\overline{z}^m+|C|\quad \textrm{ for all }\quad z\in \mathbb{C}.
\end{equation}
The proof of~(ii) implies~(i) is complete.

Finally, the equivalence of~(ii) and~(iii) is straightforward due to the following relation
\begin{equation}
\frac{A^mB^{n+m}}{C^{n+2m}} =\frac{ |A|^m |B|^{n+m}}{ |C|^{n+2m}} e^{\ii(m\alpha +(n+m)\beta-(n+2m)\gamma)}.
\end{equation}
\qedw

\subsection{\textbf{Proof of Corollary~\ref{cor:difroots}}}\label{sec:cor:difroots}
\hfill
\smallskip

\noindent
Assume that $h$ has two different roots with modulus $r>0$.
After a rotation, one can see that $h$ is Egerv\'ary equivalent to a harmonic trinomial $\widetilde{h}$ with roots $\zeta_1= r$ and $\zeta_2= re^{\ii \theta}$, where $\theta\in (0,2\pi)$. 
Without loss of generality, we assume that $h=\widetilde{h}$.
Since 
$h(\zeta_1)=h(\zeta_2)=0$, we have
\begin{equation}
A=-\frac{C}{r^{n+m}} \frac{e^{\ii m\theta}-1}{e^{\ii(n+2m)\theta}-1}\quad \textrm{ and }\quad
B= -\frac{C}{r^m} \frac{e^{\ii m\theta} (e^{\ii(n+m)\theta}-1)}{e^{\ii(n+2m)\theta}-1}.
\end{equation}
Recall the identity
\begin{equation}
e^{\ii t}-1= 2\ii\cdot \sin\left(\frac{t}{2}\right)e^{\ii\frac{t}{2}}\quad \textrm{ for any }\quad t\in \mathbb{R}.
\end{equation}
Then we have
\begin{equation}
\frac{A^m B^{n+m}}{C^{n+2m}}= \frac{1}{r^{2m(n+m)}}(-1)^{n}\frac{\sin ^m(\frac{m\theta}{2}) \sin^{n+m}(\frac{(n+m)\theta}{2})}{\sin^{n+2m}(\frac{(n+2m)\theta}{2})},
\end{equation}
which with the help of~(iii) of 
Corollary~\ref{cor:real} yields the statement.
\qedw

\subsection{\textbf{Proof of Corollary~\ref{cor:leasttwo}}}\label{sec:cor:leasttwo}
\hfill
\smallskip

\noindent
After a rotation,
without loss of generality, we can assume that $h$ has a real root $r>0$ with multiplicity at least two. 
The function $\gamma(x):= Ax^{n+m} +Bx^m +C$, $x\in \mathbb{R}$ represents a curve in the complex plane $\mathbb{C}$.
Since $r$ is a root of $h$ with multiplicity at least two and   $h(x)= \gamma(x)$ for all $x\in \mathbb{R}$, we have $\gamma(r)=\gamma'(r)=0$, where $\gamma'$ denotes the derivative of $\gamma$.
The latter reads as follows
\begin{equation}
Ar^{n+m}+Br^m+C=0\quad \textrm{ and }\quad
(n+m)Ar^{n+m-1}+mBr^{m-1}=0,
\end{equation}
so $A= \frac{m C}{r^{n+m}n}$, $B= -\frac{(n+m)C}{r^m n}$. A straightforward computation yields
\[\frac{A^mB^{n+m}}{C^{n+2m}} = \frac{(-1)^{n+m}}{r^{2m(n+m)}}\frac{m^m(n+m)^{n+m}}{n^{n+2m}},
\]
which with the help of~(iii) of 
Corollary~\ref{cor:real} yields the statement.
\qedw

\subsection{\textbf{Proof of Theorem~\ref{lem:geometric}}}\label{sec:lem:geometric}
\hfill
\smallskip

\noindent
By~(ii) of Corollary~\ref{cor:real},  
it is enough to show~\eqref{eq:reales}.
By hypothesis we have that $h(r)=0$. 
We assume that $n$ and $m$ are co-prime numbers. 
Since $|A|r^{n+m}$, $|B|r^{m}$ and $|C|$ are the side lengths of some degenerate triangle,
the contrapositive of Lemma~2.11 in~\cite{Barrera} applied to 
$\widetilde{h}(z):=e^{-\ii\gamma}h(z)$, $z\in \mathbb{C}$
 yields
\begin{equation*}
(n+m)(\beta-\gamma)+m(\alpha-\gamma)\equiv 0 \quad \mop.
\end{equation*} 
The latter implies~\eqref{eq:reales}.
Moreover, by~\eqref{eq:hequ} we have that $h$ is Egerv\'ary equivalent to $g_{u,v}$ for some $u,v\in \{-1,1\}$, where $g_{u,v}$ is defined in~\eqref{eq:guv}.

We continue with the proof when $d:=\textsf{gcd}(n+m,m)\in \{2,\ldots,m\}$. Observe that $\textsf{gcd}(n,m)=d$.
Let $n':=n/d$ and $m':=m/d$ and note that 
$\textsf{gcd}(n',m')=1$.
Since $h$ has a root of modulus $r$,
we have that the harmonic trinomial
\begin{equation}
H(z):=A_1z^{n'+m'}+B_1\overline{z}^{m'}+C_1\quad \textrm{ for all }\quad z\in \mathbb{C}
\end{equation}
has a root of modulus $r^d$. Then the previous discussion for the co-prime case implies
\begin{equation*}
(n'+m')(\beta-\gamma)+m'(\alpha-\gamma)\equiv 0 \quad \mop.
\end{equation*}
Multiplying by $d$ in both sides the preceding inequality yields 
\begin{equation*}
(n+m)(\beta-\gamma)+m(\alpha-\gamma)\equiv 0 \quad \mop.
\end{equation*}
In addition, $H$ is Egerv\'ary equivalent to \begin{equation}
\widetilde{g}_{u,v}(z):= u|A|z^{n'+m'}+v|B|\overline{z}^{m'}+|C|,\quad  z\in \mathbb{C},
\end{equation} for some $u,v\in \{-1,1\}$.
The change of variable $z \mapsto z^d$ yields that $h$ is Egerv\'ary equivalent to $g_{u,v}$ for some $u,v\in \{-1,1\}$.

In the sequel, we show~(a). 
We start with the following observation. The relation $|C|=|A|r^{n+m}+|B|r^m$ holds true if and only if  $g_{-1,-1}(r)= 0$. 
By Descartes' rule of signs we have that $r$ is the unique positive real number satisfying $g_{-1,-1}(r)=0$.
By Theorem~\ref{th:egervaryI} one can verify that
\begin{equation}\label{eq:casoseger}
g_{-1,-1}\textrm{ is Egerv\'ary equivalent to }
\begin{cases}
g_{-1,1} & \textrm{ if and only if } n+m \textrm{ is an  even number},\\
g_{1,1} & \textrm{ if and only if } n \textrm{ is an  even number},\\
g_{1,-1} & \textrm{ if and only if } m \textrm{ is an  even number},\\
\end{cases}
\end{equation}
where $g_{u,v}$ is defined in~\eqref{eq:guv}.

Now, we assume that $n$ and $m$ are co-prime numbers.
Then we claim that $g_{-1,1}$, $g_{1,1}$ and $g_{1,-1}$ are never Egerv\'ary equivalent between them.
Indeed,  we start assuming that $n+m$ is an even number.
By~\eqref{eq:casoseger} we have $g_{-1,-1}$ is Egerv\'ary equivalent to $g_{-1,1}$.
Since $n$ and $m$ are co-prime numbers,  the assumption that $n+m$ is an even number imply that $n$ and $m$ are odd numbers.
Recall that being Egerv\'ary equivalent is an equivalence relation. Hence~\eqref{eq:casoseger} yields that $g_{-1,1}$ cannot be Egerv\'ary equivalent to $g_{1,1}$ neither $g_{1,-1}$ when $n+m$ is an even number.

We now claim that $g_{1,1}$ and $g_{1,-1}$  do not have a root of modulus $r$. We start showing that $g_{1,-1}$ does not have a root of modulus $r$.
Indeed, by contradiction 
assume that there exists $\zeta=r e^{\ii \theta}$ with $\theta\in [0,2\pi)$ such that $g_{1,-1}(\zeta)=0$, that is, 
\[
|A|r^{n+m}e^{\ii(\theta(n+m))}+|B|r^me^{-\ii (\theta m+\pi)}+|C|=0.
\]
Since  $|C|= |A|r^{n+m}+|B|r^m$, Lemma~A.3 in Appendix~A of~\cite{Barrera} implies
\begin{equation}
\theta(n+m)\equiv \pi \quad \mo\quad\textrm{ and }\quad 
-\theta m -\pi \equiv \pi \quad \mo.
\end{equation}
Then we have
\[ \theta= \frac{(2k+1)\pi}{n+m}= \frac{2\pi k'}{m}\]
for some $k, k' \in \mathbb{Z}$. Hence, $(2k+1)m= 2k'(n+m)$, which is a contradiction since $m$  and $2k+1$ are odd numbers.

Now, we prove that $g_{1,1} $ has no root of modulus $r$.
By contradiction,
assume that there exists a root of  $g_{1,1} $ of the form $\zeta=r e^{\ii \theta}$ with $\theta\in [0,2\pi)$. Then it follows that
\[
|A|r^{n+m}e^{\ii(\theta(n+m))}+|B|r^me^{-\ii (\theta m)}+|C|=0.
\]
Similarly to the previous case, we obtain 
\begin{equation}
\theta(n+m)\equiv \pi \quad \mo\quad\textrm{ and }\quad 
-\theta m  \equiv \pi \quad \mo,
\end{equation}
which implies $(2k+1)m= (2k'+1)(n+m)$ for some $k, k' \in \mathbb{Z}$. This yields a contradiction since $m$ and $2k+1$ are odd numbers and $n+m$ is an even number. 

We observe that~\eqref{eq:guv} yields that $h$ is Egerv\'ary equivalent to $g_{u,v}$ for some $u,v\in \{-1,1\}$. The preceding analysis implies that $h$ is Egerv\'ary equivalent to $g_{-1,-1}$, which is also Egerv\'ary equivalent to $g_{-1,1}$. 

The proof when $n$ is an even number and the proof when $m$ is an even number follow similarly and we omit them. In summary, the proof of~(a) is complete.
Moreover, the proofs of~(b)~and~(c) are analogous. 
\qedw

\subsection{\textbf{Proof of Theorem~\ref{th:samemodulus}}}\label{sec:th:samemodulus}
\hfill
\smallskip

\noindent
Since~\eqref{eq:cociente} is valid,
without loss of generality we assume that
\[
\left|\frac{A_1}{A_2}\right|=\left|\frac{B_1}{B_2}\right|=\left|\frac{C_1}{C_2}\right|=1,
\]
that is,
$|A_1|= |A_2|$, $|B_1|= |B_2|$ and $|C_1|= |C_2|$.
Let $r>0$ be fixed. Then the following straightforward remark is true: 
$|A_1|r^{n+m}$, $|B_1|r^m$ and $|C_1|$ are the side lengths of a triangle $\Delta_1$ (it may be degenerate), if and only if, $|A_2|r^{n+m}$, $|B_2|r^m$ and $|C_2|$ are the side lengths of a triangle $\Delta_2$.
In fact, $\Delta_1$ and $\Delta_2$ are congruent.

By hypothesis, $h_1$ and $h_2$ have roots (such roots may be different) of modulus $r$ for some $r>0$.
The proof is divided in three cases accordingly to $|A_1|r^{n+m}$, $|B_1|r^m$ and $|C_1|$ are the side lengths of some triangle.

We now assume that $n$ and $m$ are co-prime numbers.

\noindent
{Case~(1).} Assume that $|A_1|r^{n+m}$, $|B_1|r^m$ and $|C_1|$ are not the side lengths of any triangle. By Lemma~2.6 in~\cite{Barrera} we have that there is no root of modulus $r$ for the harmonic trinomial $h_1$ and $h_2$, which yields a contradiction.

\noindent
{Case~(2).} 
Assume that $|A_1|r^{n+m}$, $|B_1|r^m$ and $|C_1|$ are the side lengths of some triangle. 
For each $j\in \{1,2\}$, we set the corresponding pivotals
\begin{equation}\label{eq:pivotals}
\begin{split}
P_{*,j}&= \frac{(n+m)(\beta_j-\gamma_j -\pi)+ m(\alpha_j-\gamma_j -\pi)}{2\pi}
\quad
\textrm{ and }\\
\omega_{*,j}&= \frac{(n+m)w_1-mw_2}{2\pi},
\end{split}
\end{equation}
where $w_1$ and $w_2$ are the angles opposite to the side lengths $|A_1|r^{n+m}$ and $|A_2|r^{m}$, respectively. We note that
$\omega_{*,1}(r)= \omega_{*,2}(r)$. By Proposition~2.3  in~\cite{Barrera} 
  for each $j=1, 2$ we have that 
  $P_{*,j}+ \omega_{*,j}$ or $P_{*,j}- \omega_{*,j}$ are integers numbers.
  If $P_{*,1}+ \omega_{*,1}(r)$ and $P_{*,2}+ \omega_{*,2}(r)$ are integers, then 
$P_{*,1}-P_{*,2}$ is an integer and by~\eqref{eq:pivotals} we deduce
\[
(n+m)(\beta_1-\beta_2)+m(\alpha_1-\alpha_2)-(n+2m)(\gamma_1-\gamma_2)\equiv 0\quad \mo.
\]
The remainder cases
are similar  and hence we omit their proofs.
 
\noindent
{Case~(3).} Assume that $|A_1|r^{n+m}$, $|B_1|r^m$ and $|C_1|$ are the side lengths of some  degenerate triangle. 
By~(a),~(b)~and~(c) of Theorem~\ref{lem:geometric} we have that $h_1$ and $h_2$ are Egerv\'ary equivalent. Hence~\eqref{eq:mod} in Theorem~\ref{th:egervaryI} yields~\eqref{ecn:one}.

By Case~(1), Case~(2) and Case~(3) we finish the proof for the co-prime setting.

We continue with the proof of~\eqref{ecn:one} for $d:=\textsf{gcd}(n+m,m)\in \{2,\ldots,m\}$.
Let $n':=n/d$ and $m':=m/d$ and note that 
$\textsf{gcd}(n',m')=1$.
Since $h_1$ and $h_2$ have roots (such roots may be different) of modulus $r$ for some $r>0$,
we have that the harmonic trinomials
\begin{equation}
H_1(z):=A_1z^{n'+m'}+B_1\overline{z}^{m'}+C_1\quad \textrm{ for all }\quad z\in \mathbb{C}
\end{equation}
and
\begin{equation}
H_2(z):=A_2z^{n'+m'}+B_2\overline{z}^{m'}+C_2\quad \textrm{ for all }\quad z\in \mathbb{C}
\end{equation}
have roots (such roots may be different) of modulus $r^d$. Then the previous discussion for the co-prime case implies
\begin{equation}\label{eq:nada}
m'(\alpha_1\pm \alpha_2)+(n'+m')(\beta_1\pm \beta_2) - (n'+2m')(\gamma_1\pm \gamma_2)\equiv 0 \quad \mo.
\end{equation}
Multiplying by $d$ in both sides of~\eqref{eq:nada} 
gives~\eqref{ecn:one}. Finally, Theorem~\ref{th:egervaryI} yields that $h_1$ and $h_2$ are Egerv\'ary equivalent.
\qedw

\appendix

\section{\textbf{Tools}}\label{append}
This section contains auxiliary results that help us to make this paper more fluid. 

\begin{lemma}[Linear Diophantine solutions I]\label{lem:parame}
\hfill

\noindent
Let $n,m\in \mathbb{N}$ be fixed.
Then
the solutions $x_1$, $x_2$, $x_3$ of the linear Diophantine equation
\begin{equation}\label{eq:po1}
m(x_1-x_3)+(n+m)(x_2-x_3)
\equiv 0 \quad \mo
\end{equation}
can be parametrized  as follows
\begin{equation}\label{eq:po2}
x_1 \equiv x_3 +(n+m)\delta \quad  \mo\quad \textrm{ and }\quad  x_2 \equiv x_3 - m\delta \quad  \mo
\end{equation}
for some $\delta\in \mathbb{R}$. 
\end{lemma}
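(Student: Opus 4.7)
My plan is to establish both directions of the claimed equivalence between the congruence~\eqref{eq:po1} and the parametric form~\eqref{eq:po2}.

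The sufficient direction (parametric form implies~\eqref{eq:po1}) is a one-line substitution. If $x_1 - x_3 = (n+m)\delta + 2\pi k_1$ and $x_2 - x_3 = -m\delta + 2\pi k_2$ for some $\delta \in \mathbb{R}$ and $k_1, k_2 \in \mathbb{Z}$, then
\[
m(x_1 - x_3) + (n+m)(x_2 - x_3) = m(n+m)\delta - (n+m)m\delta + 2\pi\bigl(mk_1 + (n+m)k_2\bigr),
\]
where the first two terms cancel and the remainder is a multiple of $2\pi$, so~\eqref{eq:po1} holds.

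For the necessary direction, I would fix $x_1, x_2, x_3 \in \mathbb{R}$ with $m(x_1 - x_3) + (n+m)(x_2 - x_3) = 2\pi N$ for some $N \in \mathbb{Z}$, and seek a real $\delta$ together with integers $k_1, k_2$ realizing the parametric identities. My first step would be to set $\delta := (x_1 - x_3 + 2\pi k_1)/(n+m)$ with $k_1 \in \mathbb{Z}$ to be chosen later; this forces the first identity of~\eqref{eq:po2} automatically. Substituting this $\delta$ into $x_2 - x_3 + m\delta$ and invoking the hypothesis yields
\[
x_2 - x_3 + m\delta = \frac{2\pi(N + k_1 m)}{n+m},
\]
so the second identity of~\eqref{eq:po2} reduces to the integer divisibility $(n+m)\mid N + k_1 m$, equivalently to the solvability of the linear Diophantine equation $k_1 m - k_2(n+m) = -N$.

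The main obstacle, where I would spend the most care, is this Bezout step: by Bezout's lemma the Diophantine equation admits an integer solution if and only if $\gcd(m, n+m) = \gcd(m, n)$ divides $N$. When $\gcd(m, n) = 1$, solvability is immediate; one then sets $k_1$ accordingly, defines $\delta$ as above and $k_2 := (N + k_1 m)/(n+m) \in \mathbb{Z}$, and concludes. In the general case $d := \gcd(m, n) > 1$, I would reduce via $m = d m'$, $n = d n'$ with $\gcd(m', n') = 1$, rescale the congruence and the parameter $\delta$ to this coarser lattice, apply the coprime version there, and lift back to the original scale. This last reduction is the delicate point, because the fixed period $2\pi$ does not rescale together with the indices $m, n$; handling the compatibility of the lifts is where the bulk of the bookkeeping lies.
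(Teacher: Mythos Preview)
Your proposal follows essentially the same route as the paper's proof: both verify the sufficient direction by direct substitution, and for the necessary direction both rewrite the congruence as $m(x_1-x_3)+(n+m)(x_2-x_3)=2\pi k$, reduce to the integer solvability of $mz_1+(n+m)z_2=k$ via B\'ezout, and treat the non-coprime case by rescaling to coprime exponents $m'=m/d$, $n'=n/d$. The only organizational difference is that you fix $\delta$ first so as to force the $x_1$-congruence and then solve for $k_1$, whereas the paper writes the full real solution set as ``homogeneous plus particular'' and then seeks an \emph{integer} particular solution; these are the same computation. For the step you flag as delicate, the paper carries out exactly the rescaling you outline: it sets $x_i'=d\,x_i$, observes that $m'(x_1'-x_3')+(n'+m')(x_2'-x_3')=2\pi k$ with $\gcd(m',n'+m')=1$, applies the coprime case to obtain $x_1'-x_3'=(n'+m')\delta+2\pi z_1'$ and $x_2'-x_3'=-m'\delta+2\pi z_2'$ with $z_1',z_2'\in\mathbb{Z}$, and then multiplies through by $d$ to recover~\eqref{eq:po2}. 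Your instinct that the $2\pi$-period does not rescale with the indices is well placed; the paper's bookkeeping at this last step is terse, so if you write it up you may want to make the choice of the new $\delta$ and the verification of the integrality explicit.
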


\begin{proof}
We note that~\eqref{eq:po1} reads as follows
\begin{equation}\label{eq:2pik}
m(x_1-x_3)+(n+m)(x_2-x_3)
=2\pi k\quad \textrm{ for some }\quad k\in \mathbb{Z}.
\end{equation}
Let $k\in \mathbb{Z}$ be fixed.
We observe that the solutions of the homogeneous equation
\begin{equation}
m(x_1-x_3)+(n+m)(x_2-x_3)
=0
\end{equation}
can be parametrized by
$x_1-x_3=(n+m)\delta$
and 
$x_2-x_3=-m\delta$  for $\delta\in \mathbb{R}$.
If $k=0$ we immediately obtain~\eqref{eq:po2}.
Then we assume that $k\in \mathbb{Z}\setminus\{0\}$.
One can see that the solutions of~\eqref{eq:2pik} can be parametrized by 
\begin{equation}\label{eq:ghtp}
x_1-x_3=(n+m)\delta +2\pi z_1
\quad\textrm{ and }\quad
x_2-x_3=-m\delta+2\pi z_2
\quad \textrm{ for } \quad \delta\in \mathbb{R},
\end{equation}
where $z_1\in \mathbb{R}$ and $z_2\in \mathbb{R}$ is a particular solution of the linear Diophantine  equation
$mz_1+(n+m)z_2=k$.
Now, we assume that $\textsf{gcd}(n+m,m)=1$. 
Then B\'ezout's Identity implies that
there exists $z^*_1\in \mathbb{Z}$ and $z^*_2\in \mathbb{Z}$ satisfying 
$mz^*_1+(n+m)z^*_2=k$.
Choosing $z_1=z^*_1$ and $z_2=z^*_2$ 
in~\eqref{eq:ghtp},
we obtain~\eqref{eq:po2}.

We continue with the proof of~\eqref{eq:po2} for $d:=\textsf{gcd}(n+m,m)\in \{2,\ldots,m\}$.
Let $n':=n/d$ and $m':=m/d$ and note that 
$\textsf{gcd}(n',m')=1$.
We rewrite~\eqref{eq:2pik} as follows
$m'(x'_1-x'_3)+(n'+m')(x'_2-x'_3)
=2\pi k$ for some $ k\in \mathbb{Z}$,
where $x'_1=x_1 d$, $x'_2:=x_2 d$ and $x'_3:=x_3 d$.
Since $\textsf{gcd}(n'+m',m')=1$,
the previous reasoning yields
\begin{equation}\label{eq:ec}
x'_1-x'_3=(n'+m')\delta +2\pi z'_1
\quad\textrm{ and }\quad
x'_2-x'_3=-m'\delta+2\pi z'_2
\quad \textrm{ for } \quad \delta\in \mathbb{R},
\end{equation}
and some integers $z'_1$ and $z'_2$. 
Multiplying by $d$ in  both sides of the equalities given in~\eqref{eq:ec} we 
obtain~\eqref{eq:po2}.
\end{proof}

\begin{lemma}[Linear Diophantine solutions II]\label{lem:paramedos}
\hfill

\noindent
Let $n,m\in \mathbb{N}$ be fixed.
Then
the solutions $x_1$, $x_2$, $x_3$ of the linear Diophantine equation
\begin{equation}\label{eq:po1pi}
m(x_1-x_3)+(n+m)(x_2-x_3)
\equiv 0 \quad \mop
\end{equation}
can be parametrized  as follows
\begin{equation}\label{eq:po2pi}
x_1 \equiv x_3 +(n+m)\delta \quad  \mop\quad \textrm{ and }\quad  x_2 \equiv x_3 - m\delta \quad  \mop
\end{equation}
for some $\delta\in \mathbb{R}$. 
\end{lemma}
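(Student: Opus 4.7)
The plan is to mirror the proof of Lemma~\ref{lem:parame} line by line, replacing $2\pi$ by $\pi$ throughout; this is natural because the only structural difference between the two statements is the modulus of the congruence. First I would rewrite~\eqref{eq:po1pi} as the integer-valued identity
\[
m(x_1-x_3)+(n+m)(x_2-x_3) = \pi k\quad \textrm{for some}\quad k\in\mathbb{Z},
\]
so that the task reduces to parametrizing the real solutions of this equation. The associated homogeneous equation ($k=0$) has the evident real-parameter solution $x_1-x_3=(n+m)\delta$, $x_2-x_3=-m\delta$, which immediately yields~\eqref{eq:po2pi} in that case.

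For $k\neq 0$, the general solution set is obtained by superposing a particular solution with the homogeneous family, giving
\[
x_1-x_3=(n+m)\delta + \pi z_1,\qquad x_2-x_3=-m\delta + \pi z_2,
\]
where $(z_1,z_2)\in\mathbb{Z}^2$ is any particular integer solution of the linear Diophantine equation $mz_1+(n+m)z_2=k$. In the co-prime case $\textsf{gcd}(n+m,m)=1$, B\'ezout's Identity supplies such integers $z_1,z_2$, and reducing the two displayed equalities modulo $\pi$ produces~\eqref{eq:po2pi}.

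For the general case $d:=\textsf{gcd}(n+m,m)\in\{2,\ldots,m\}$, I would set $n':=n/d$ and $m':=m/d$, so that $\textsf{gcd}(n',m')=1$, and introduce the scaled variables $x_i':=d\,x_i$. The original equation becomes $m'(x_1'-x_3')+(n'+m')(x_2'-x_3')=\pi k$, to which the co-prime case just treated applies; rescaling the resulting parametrization by $d$ then recovers~\eqref{eq:po2pi} for the original variables $x_i$. I do not anticipate any substantial obstacle, since the argument is a faithful transcription of the proof of Lemma~\ref{lem:parame} with the modulus rescaled; the only point requiring a small amount of care is the bookkeeping of the factor $d$ when passing between the original and scaled variables in the non-coprime reduction.
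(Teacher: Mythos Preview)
Your proposal is correct and is exactly the approach the paper takes: the paper's own proof of this lemma consists solely of the sentence ``The proof follows step by step from the proof of Lemma~\ref{lem:parame} replacing $2\pi$ by $\pi$,'' and you have carried out precisely that substitution in full detail.
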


\begin{proof}
The proof follows step by step from the proof of Lemma~\ref{lem:parame} replacing $2\pi$ by $\pi$.
\end{proof}

\section*{\textbf{Declarations}}
\noindent

\medskip

\noindent
\textbf{Acknowledgments.} 
\hfill

\noindent
G. Barrera would like to express his gratitude to University of Helsinki, Department of Mathematics and
Statistics, for all the facilities used along the realization of this work. 
He thanks the Faculty of Mathematics, UADY, Mexico, for the hospitality during the research visit in 2022, where partial work on this paper was undertaken. All authors are greatly indebted with professor P\'eter Kevei
(Bolyai Institute, University of Szeged) for his support on the translation of~\cite{eger} and professor P\'eter G\'abor Szab\'o (Bolyai Institute, Department of Computational Optimization, University of Szeged) for providing us a copy of the original 
manuscript~\cite{eger}.
The authors
are grateful to the reviewer for the thorough examination of the paper, which has lead to a significant improvement.

\medskip

\noindent
\textbf{Funding.} 
\hfill

\noindent
The research of G. Barrera has been supported by the Academy of Finland,
via an Academy project (project No. 339228) and the Finnish Centre of Excellence in Randomness and STructures (project No. 346306).
The research of W. Barrera and 
J.P. Navarrete has been supported by the CONACYT, ``Proyecto Ciencia de Frontera'' 2019--21100 via Faculty of Mathematics, UADY, M\'exico.

\medskip

\noindent
\textbf{Ethical approval.} 
\hfill

\noindent
Not applicable. 

\medskip

\noindent
\textbf{Competing interests.}
\hfill

\noindent
The authors declare that they have no conflict of interest.

\medskip

\noindent
\textbf{Authors' contributions.}
\hfill

\noindent
All authors have contributed equally to the paper.

\medskip

\noindent
\textbf{Availability of data and materials.}
\hfill

\noindent
Data sharing not applicable to this article as no data-sets were generated or analyzed during the current study.

\end{document}